\documentclass[11pt,a4paper]{amsart}

\usepackage{amscd,amssymb,mathrsfs}
\usepackage{mathtools}
\usepackage{enumerate}
\usepackage{float}
\usepackage{tabularx}
\newcolumntype{C}{>{\centering\arraybackslash}X}

\usepackage{xcolor}
\usepackage{tikz-cd}
\usepackage{tikz}

\usepackage[hidelinks]{hyperref}
\usepackage{orcidlink}

\newcommand{\Q}{\mathbb Q}
\newcommand{\R}{\mathbb R}

\newcommand{\htilde}{\tilde{h}}

\newcommand\im{\operatorname{im}}

\newcommand\id{\operatorname{id}}

\newtheorem{theorem}{Theorem}
\newtheorem{lemma}[theorem]{Lemma}
\newtheorem{proposition}[theorem]{Proposition}
\newtheorem{corollary}[theorem]{Corollary}

\theoremstyle{definition}

\theoremstyle{remark}
\newtheorem{remark}[theorem]{Remark}

\begin{document}

\title{Handlebodies separate elements of surface groups}

\author[Richard Hain]{Richard Hain\, \orcidlink{0000-0002-7009-6971}}
\address{Department of Mathematics\\ Duke University\\ Durham, NC 27708-0320}
\email{hain@math.duke.edu}

\thanks{The author gratefully acknowledges generous support of a Simons Foundation travel grant}

\date{\today}



\maketitle

\section{Introduction}

The goal of this short note is to prove the following statement.

\begin{proposition}
\label{prop:main}
If $X$ is a closed oriented surface  of genus $g \ge 1$ and $p\in X$, then
$$
\bigcap_{X = \partial U} \ker\{\pi_1(X,p) \to \pi_1(U,p)\}
$$
is trivial, where $U$ ranges over all handlebodies with boundary $X$.
\end{proposition}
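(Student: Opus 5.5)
Given a nontrivial $\gamma\in\pi_1(X,p)$, the plan is to construct a handlebody $U$ with $\partial U = X$ in which $\gamma$ stays nontrivial. Write $\Gamma=\pi_1(X,p)$. For any handlebody $U$, the map $\Gamma\to\pi_1(U,p)\cong F_g$ is surjective, and its kernel is normally generated by the meridians of $U$. Every other handlebody filling $X$ has the form $\phi(U)$ for some $\phi$ in the mapping class group $\mathrm{Mod}(X)$: we glue the fixed handlebody $U_0$ along $\phi$, and its kernel is $\phi^{-1}(K_0)$, where $K_0=\ker(\Gamma\to\pi_1(U_0))$. The proposition therefore reduces to a purely group-theoretic claim: the intersection $\bigcap_{\phi}\phi(K_0)$ over all automorphisms $\phi$ induced by homeomorphisms is trivial. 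Equivalently, for each $\gamma\ne 1$ we want some $\phi$ with $\phi(\gamma)\notin K_0$.

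The first step is a reduction via residual properties. Surface groups are residually free; when $g\ge 2$ they are in fact fully residually free (Baumslag). So for $\gamma\neq 1$ there is a homomorphism $\rho:\Gamma\to F$ to a free group with $\rho(\gamma)\ne 1$. We can take $F$ to be finitely generated, and indeed of rank at most $g$, since the rank-$g$ free group $F_g$ contains $F_\infty$ and retracts onto images.

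The key step is to show that every homomorphism $\rho:\Gamma\to F_g$ is, after modification, realised by a handlebody. By Stallings/Jaco (the "Jaco's theorem" that epimorphisms from surface groups to free groups are geometric), any epimorphism $\Gamma\to F_k$ with $k\le g$ factors as $\Gamma\to\pi_1(U)\to F_k$ for some handlebody $U$ bounded by $X$. Concretely, one takes a map $X\to \bigvee^k S^1$, makes it transverse to midpoints of the circles, and surgers the preimage curves (compressing the surface, as in the proof that the kernel is generated by simple closed curves) until the preimages are $k$ disjoint non-separating curves. One then extends these to a full system of $g$ meridians bounding a handlebody, so that $\rho$ kills those meridians. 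When $\rho$ is not surjective, we first replace it by a surjection onto its image, which is free of rank at most $g$, or compose with an embedding so that surjectivity onto a free factor can be arranged. Then $\rho(\gamma)\ne1$ forces $\gamma\notin\ker(\Gamma\to\pi_1(U))$.

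The main obstacle is this geometric realisation step: ensuring that the compression process ends with a kernel containing a full handlebody's meridian system while preserving the nontriviality of $\gamma$. The first approach to try is Jaco's theorem combined with Stallings' binding-ties argument. The genus-one case is easy to handle directly: $\Gamma=\mathbb Z^2$, and a nonzero $\gamma$ survives in $\mathbb Z$ under projection along a primitive vector not parallel to it, which is a solid-torus filling.
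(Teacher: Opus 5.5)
Your overall strategy matches the paper's. Residual freeness gives a free quotient $\rho$ with $\rho(\gamma)\neq 1$, and one then shows that $\rho$ factors through $\pi_1(X,p)\to\pi_1(U,p)$ for some handlebody $U$. Your genus-one argument is fine.

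The gap is the step you yourself flag as the main obstacle, and it is left open. You assert that the preimage curves of the midpoints can be surgered until they are exactly $k$ disjoint non-separating curves with connected complement, but you give no argument. This is not a routine compression; it is essentially the classification of epimorphisms from surface groups onto free groups. The geometricity theorem you invoke is, as far as I know, usually quoted for epimorphisms onto $F_g$. The case $k<g$ is precisely what the paper's one new ingredient, Lemma~\ref{lem:lift}, is for:
\begin{itemize}
\item inside a complementary piece $Z_j$ of positive genus, cut off a genus-one subsurface $E$ by a separating curve $C$;
\item map $\pi_1(E)$ onto $\mathbb{Z}$, which kills $C$, as $\phi$ also does;
\item use van Kampen to get an epimorphism onto $F*\mathbb{Z}$ through which $\phi$ factors.
\end{itemize}
Iterating reaches rank $g$. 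The identity $r+\sum_j g(Z_j)=g$ of Lemma~\ref{lem:genera} then forces every $Z_j$ to be planar. So the $Z_j$ can be capped by 3-balls glued along disks, producing $U$ with a map $U\to W$ extending $f$. Separately, your reason for the bound ``rank of $F$ at most $g$'' (that $F_g$ contains $F_\infty$ and retracts onto images) is not an argument. The correct reason is that $\phi^*H^1(F)$ is an isotropic subspace of $H^1(X)$ whose rank is the rank of $F$.

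Your obstacle can in fact be bypassed without any surgery, surjectivity, or rank bound. Suppose $\Phi:X\to W$ realizes $\rho$ and is transverse to the set $P$ of midpoints. Each component of $\Phi^{-1}(P)$ maps to a point, and each complementary piece $Z_j^o$ maps into the contractible set $W-P$. Hence every component of $\Phi^{-1}(P)$, and every closed curve in some $Z_j^o$, becomes null-homotopic after applying $\Phi$, so it lies in $\ker\rho$ up to conjugacy. Add disjoint curves in the $Z_j^o$ of positive genus so that all complementary pieces of the enlarged multicurve $\mathcal{C}'$ are planar. Then attach 2-handles to $X\times[0,1]$ along $\mathcal{C}'\times\{1\}$, and 3-balls along the resulting spheres; equivalently, cap the planar pieces by balls glued along disks, as in the paper. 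This gives a handlebody $U$ with $\partial U=X$, and $\ker\{\pi_1(X,p)\to\pi_1(U,p)\}$ is normally generated by $\mathcal{C}'$. That kernel is therefore contained in $\ker\rho$, so $\gamma$ survives in $\pi_1(U,p)$.
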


The proof is a refinement of the proof of a similar result \cite[Lem.~2.2]{leininger-reid} of Leininger and Reid. Its relevance was pointed out to the author by Sebastian Hensel. The only new ingredient is Lemma~\ref{lem:lift} and its Corollary~\ref{cor:F_g}.

The result is used in \cite{dupont-hain} to establish a lower bound on the action of the pure braid group $P_m(X)$ of an $m$-punctured surface $Y=X-\{p_1,\dots,p_m\}$ on the to cohomology group $H^n(F_n(Y);\Q)$ of the ordered configuration space $F_n(Y)$ of $Y$.
\bigskip

\noindent{\em Acknowledgments:} I am grateful to Sebastian Hensel for suggesting that the proposition should follow from the result \cite[Lem.~2.2]{leininger-reid} of Leininger and Reid and the fact, due to Baumslag \cite{baumslag}, that surface groups are residually free.

\medskip

\noindent{\em AI statement:} No large language models were used to suggest proofs, generate examples, or to aid in the writing of this manuscript.

\section{Preliminaries}

Suppose that $\gamma$ is a non-trivial element of $\pi_1(X,p)$. We have to show that there is a handlebody $U$ with boundary $X$ such that the image of $\gamma$ in $\pi_1(U,p)$ is non-trivial. Since surface groups are residually free \cite{baumslag}, there is a free group $F$ and a homomorphism $\phi : \pi_1(X,p) \to F$ such that $\phi(\gamma)\neq 1$. Since subgroups of free groups are free, we can (and will) assume that $\phi$ is surjective. In this case,
$$
\phi^\ast : H^1(F) \to H^1(X)
$$
is injective and has primitive image. (That is, $H^1(X)/\im \phi^\ast$ is torsion free.) Since $\phi^\ast$ is a ring homomorphism, $\im \phi^\ast$ is an isotropic subspace of $H^1(X)$. It follows that the rank $r$ of $F$ satisfies $r\le g$.

We can realize $\phi$ by a continuous mapping $f : X \to W$, where $W$ is a bouquet of $r$ circles, where $f_\ast(\gamma)$ is non-trivial in $\pi_1(W,\ast)$. We can assume that $W$ is a closed subspace of $\R^2$ where each each edge is smoothly embedded and the tangent directions of the edges at the ``cone point'' $\ast$ of $W$ are distinct.

\begin{lemma}
\label{lem:realization}
There is a smooth map $\Phi : X \to \R^2$ that is homotopic to $f$ and whose image is contained in $W$.
\end{lemma}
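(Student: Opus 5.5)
We want a smooth map $\Phi: X\to\R^2$, homotopic to $f$, with $\Phi(X)\subseteq W$. The idea is to retract a neighbourhood of $W$ onto $W$, push $f$ into that neighbourhood by a smooth approximation, and then compose with the retraction.

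First, choose a closed regular neighbourhood $N$ of $W$ in $\R^2$ and a continuous retraction $\rho : N\to W$ that is smooth away from a small disk around the cone point. By smooth approximation, $f$ (viewed as a map $X\to\R^2$) is homotopic, through maps into $N$, to a smooth map $f_1 : X\to N$. Composing that homotopy with $\rho$ gives a homotopy in $W$ between $f$ and $\rho\circ f_1$. This composite lands in $W$, but it need not be smooth at the preimage of $\ast$, and on the edges it is only smooth as a map into the $1$-manifold $W-\{\ast\}$. So the work is in arranging smoothness.

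To fix this, I would build $\Phi$ edge by edge using level sets. Fix a small open disk $D$ about $\ast$ meeting each edge $e_j$ in two radial arcs. Parametrize each closed edge by a smooth immersion $c_j : S^1\to\R^2$ with $c_j(1)=\ast$. Since the tangent directions at $\ast$ are distinct, any smooth map into $W$ must, near a point mapping to $\ast$, have vanishing derivative to all orders in a suitable sense, because it can only leave $\ast$ along one edge at a time. So I would build $\Phi$ to be \emph{flat} at $\Phi^{-1}(\ast)$. Concretely:
\begin{enumerate}[(i)]
\item Use transversality to get a smooth map $X\to W$ to the circle pieces: for each $j$, the composite of $f$ with the collapse $W\to S^1_j$ (crushing the other petals to the base point) is homotopic to a smooth $g_j : X\to S^1$ that is transverse to $1\in S^1$.
\item Then $\Sigma_j=g_j^{-1}(1)$ is a collection of disjoint closed curves. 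Perturb so that these curves are pairwise disjoint for different $j$, with disjoint collar neighbourhoods $A_j$. This is possible by general position, after a homotopy arranging that each $g_j$ is constant $=1$ outside a neighbourhood of a regular level of a nearby value; this is the Pontryagin--Thom picture of $f$.
\item Define $\Phi = c_j\circ h_j$ on $A_j$, where $h_j : A_j\to S^1$ is a smooth map to the circle that is constant equal to $1$ near $\partial A_j$ and flat there. Set $\Phi\equiv\ast$ off $\bigcup A_j$.
\end{enumerate}
The map $\Phi$ is smooth because each $c_j\circ h_j$ agrees to infinite order with the constant map near $\partial A_j$. It is homotopic to $f$ because it realizes the same Pontryagin--Thom data, so it induces $\phi$ on $\pi_1$ up to conjugation. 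Since $W$ is a $K(F,1)$, maps $X\to W$ are classified up to homotopy by conjugacy classes of homomorphisms, so $\Phi\simeq f$. Basepoint issues are handled by a free homotopy.

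The main obstacle is step (ii): showing that $f$ is homotopic to a map whose "petal coordinates" are supported in disjoint annular regions, so that the collapse maps really decompose $f$. My first attempt would be to take $f$ transverse to the midpoints $m_j$ of the edges. Then $f^{-1}(m_j)$ are disjoint submanifolds, namely closed curves, for different $j$ automatically. Take $A_j$ to be a thin tubular neighbourhood of $f^{-1}(m_j)$, and homotope $f$ so that the complement of $\bigcup A_j$ maps to $\ast$. This is possible since $W-\{m_j\}$ deformation retracts to $\ast$. On $A_j\cong f^{-1}(m_j)\times[-1,1]$, take $\Phi$ to be $c_j$ composed with a flat reparametrization of $[-1,1]\to S^1$. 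This gives the required flat construction directly and avoids (i).
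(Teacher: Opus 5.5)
Your final construction (the last paragraph, with the smoothness and homotopy arguments you give after step (iii)) is correct, but it takes a different route from the paper.

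The paper works on the target. It approximates $f$ by a smooth $h$ into a neighbourhood $V$ of $W$, then composes with a smooth map $r : V \to \R^2$ with image in $W$. This $r$ is only required to be a homotopy equivalence with $r|_W \simeq \id_W$, \emph{not} a retraction. That relaxation is exactly what removes the obstacle you hit in your first paragraph: $r$ may crush a whole neighbourhood of the cone point to $\ast$ and reparametrize the edges flatly. Then $\Phi = r\circ h$ is smooth by the chain rule, and $\Phi \simeq f$ follows from a linear homotopy $h \simeq f$ in $V$ together with $r|_W \simeq \id_W$.

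You instead work on the domain, Pontryagin--Thom style. You make $f$ transverse to points $m_j$, take co-oriented collars $A_j$ of the disjoint curves $f^{-1}(m_j)$, and write $\Phi$ down as an explicit flat collapse. This needs more bookkeeping:
\begin{enumerate}[(a)]
\item smoothing $f$ near $f^{-1}(m_j)$, e.g.\ via your $\rho\circ f_1$ and Sard;
\item orienting the collar coordinate compatibly with $df$;
\item checking that $f$ and $\Phi$ agree on $\pi_1$, which holds because between crossings of these curves both maps stay in the contractible set $W-\{m_1,\dots,m_r\}$.
\end{enumerate}
In return, your route makes the flatness over $\ast$ explicit. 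It also hands you directly the decomposition of $X$ along preimages of regular values, which the paper extracts afterwards from Sard.

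The two routes are closer than they look. Take $A_j = f^{-1}(I_j)$ for a small interval $I_j$ of regular values around $m_j$. Then your $\Phi$ is simply $\Theta\circ f$, where $\Theta : W \to W$ is a smooth self-map, homotopic to the identity, that crushes everything outside the interiors of the $I_j$ to $\ast$. This gives $\Phi\simeq f$ at once, with no appeal to the $K(F,1)$ property. Moreover, $\Theta$ composed with a retraction of $V$ onto $W$ (whose non-smooth locus maps into the region crushed by $\Theta$) is precisely a map $r$ of the kind the paper uses.

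Two side remarks.
\begin{enumerate}[(a)]
\item Steps (i)--(ii) would not work as written. Level sets of independently chosen maps $g_j : X\to S^1$ generally cross, and general position on a surface makes curves transverse, not disjoint. You were right to replace this by level sets of the single map $f$, where disjointness is automatic.
\item The heuristic that a smooth map into $W$ must be flat over $\ast$ is false. For example, $(x,y)\mapsto c(x^2)$, with $c$ a regular parametrization of one half-edge starting at $\ast$, is smooth but not flat. Likewise, $c_j$ cannot be an immersion at $1$ when the petal has a corner at $\ast$, as in the paper's figure. Neither point affects the argument: flatness is only used as a sufficient design choice, and $c_j$ may be taken smooth and flat at $1$.
\end{enumerate}
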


\begin{proof}[Sketch of proof]
By standard arguments, there is neighbourhood $V$ of $W$ in $\R^2$ and a smooth map\footnote{Remind folks what this means.} $r : V \to W$ satisfying:
\begin{enumerate}

\item $r(W) \subseteq W$,

\item $r : V \to W$ is a homotopy equivalence whose restriction to $W$ is homotopic to the identity $W \to W$,

\end{enumerate}
Let $\iota : W \to V$ be the inclusion. There is an $\epsilon > 0$ such that for all $w\in W$, the closed disk of radius $\epsilon$ centered at $w$ is contained in $V$. There is a smooth map $h: X \to \R^2$ for which $\|f(x)-h(x)\| < \epsilon$ for all $x\in X$. Its image is contained in $V$ and $f$ and $h$ are homotopic as maps $X \to V$ via a linear homotopy. Let $\Phi=r\circ h$.  Since $r\circ j$ is homotopic to the identity $W \to W$, we have
$$
\Phi \simeq r\circ \Phi \simeq r\circ f = r\circ \iota \circ f \simeq f
$$
where $\simeq$ denotes homotopy of maps $X \to V$.
\end{proof}

\begin{figure}[!ht]
\begin{center}
\begin{tikzpicture}[scale=0.5]
\draw (0,0) .. controls (4,-2) and (4,2) .. (0,0);
\draw [rotate=120] (0,0) .. controls (4,-2) and (4,2) .. (0,0);
\draw [rotate=240] (0,0) .. controls (4,-2) and (4,2) .. (0,0);
\end{tikzpicture}
\begin{tikzpicture}[scale=0.5]
\draw [double=black!10, double distance=8pt] (0,0) .. controls (4,-2) and (4,2) .. (0,0);
\draw [double=black!10, double distance=8pt,rotate=120] (0,0) .. controls (4,-2) and (4,2) .. (0,0);
\draw [double=black!10, double distance=8pt,rotate=240] (0,0) .. controls (4,-2) and (4,2) .. (0,0);
\filldraw[black!10] (0,0) circle [radius=14pt];
\draw (0,0) .. controls (4,-2) and (4,2) .. (0,0);
\draw [rotate=120] (0,0) .. controls (4,-2) and (4,2) .. (0,0);
\draw [rotate=240] (0,0) .. controls (4,-2) and (4,2) .. (0,0);
\end{tikzpicture}
\begin{tikzpicture}[scale=0.5]
\draw (0,0) .. controls (4,-2) and (4,2) .. (0,0);
\filldraw[black] (3,0) circle [radius=2pt];
\draw [rotate=120] (0,0) .. controls (4,-2) and (4,2) .. (0,0);
\filldraw[black] [rotate=120] (3,0) circle [radius=2pt];
\draw [rotate=240] (0,0) .. controls (4,-2) and (4,2) .. (0,0);
\filldraw[black] [rotate=240] (3,0) circle [radius=2pt];
\end{tikzpicture}
\end{center}
\caption{$W$, $(V,W)$ and $(W,P)$}
\label{fig:bouquet}
\end{figure}

The inverse image $\Phi^{-1}(e)$ of each open 1-cell $e$ of $W$ is an open subset of $X$. Applying Sard's Theorem to the restriction of $\Phi$ to $\Phi^{-1}(e)$, we see that there is a regular value of $\Phi$ on $e$. Fix one regular value $p_e \in e$ for each edge. Set $P=\{p_e\}$. The inverse image of $P$ is a finite set of simple closed curves on $X$. Removing these, we obtain a decomposition
$$
X^o := X - \Phi^{-1}(P) = \coprod_j Z_j^o
$$
where each $Z_j^o$ is a connected subsurface. Compactify each $Z_j^o$ by adding a circle to each of its ends to obtain surfaces $Z_j$ with boundary with $Z_j^o = Z_j - \partial Z_j$. Denote the genus of a compact oriented surface $Z$ by $g(Z)$.

\begin{lemma}
\label{lem:genera}
The homology classes of the the components of $\Phi^{-1}(P)$ span a primitive isotropic subspace $B$ of $H_1(X)$ of rank $r$. Moreover
$$
r + \sum_j g(Z_j) = g(X).
$$
\end{lemma}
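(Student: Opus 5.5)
The plan is to compare $X$ with the dual graph of the cut-system $C:=\Phi^{-1}(P)$, and to get both assertions of Lemma~\ref{lem:genera} from an Euler characteristic count. Let $C_1,\dots,C_n$ be the components of $C$. They are pairwise disjoint simple closed curves, so all their intersection numbers vanish. Hence their classes span an isotropic subspace $B$ of $H_1(X)$. Each $C_k$ is the preimage of a regular value, so it is cooriented by pulling back the orientation of its edge; this orients it.

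Let $\Gamma$ be the graph with one vertex for each $Z_j$ and one edge for each $C_k$, joining the (one or two) pieces on either side of $C_k$. Since $X$ is connected, so is $\Gamma$. Write $v$ for its number of vertices. Each $C_k$ contributes two boundary circles to $\coprod_j Z_j$, so
$$
2-2g(X)=\chi(X)=\sum_j \chi(Z_j)=\sum_j\bigl(2-2g(Z_j)\bigr)-2n .
$$
This gives $g(X)=\sum_j g(Z_j)+n-v+1=\sum_j g(Z_j)+b_1(\Gamma)$.

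Next I compute the rank of $B$. Consider the map $\Z^n\to H_1(X)$ sending the $k$th basis vector to $[C_k]$. Its kernel should be spanned by the relations $\partial[Z_j]=\sum \pm[C_k]$. These relations satisfy exactly one linear dependence, namely that their total sum is $0$, because each $C_k$ appears twice with opposite signs. To see that there are no further relations, I would use the Mayer--Vietoris sequence for the decomposition of $X$ along $C$, or equivalently Lefschetz duality for $(X,C)$. So $\operatorname{rank}B=n-(v-1)=b_1(\Gamma)$. Primitivity of $B$ follows the same way: $H_1(X)/B$ is a subquotient computed from the torsion-free groups $H_1(Z_j)$ and $H_1(\Gamma)$. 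Alternatively, by Poincaré duality, $B$ is dual to the image of $H^1(\Gamma)\to H^1(X)$ under the collapse map $c:X\to\Gamma$. This collapse map is $\pi_1$-surjective, since it has a section on the level of loops in the dual graph. So that image is injective with primitive image by the same argument the paper gave for $\phi^*$.

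It remains to show $b_1(\Gamma)=r$, which I expect to be the main obstacle. Every $Z_j$ maps into $W-P$, which is contractible, and a collar of each $C_k$ maps to a neighbourhood of the corresponding $p_e$. Hence $\Phi$ is homotopic to a composite $X\xrightarrow{c}\Gamma\xrightarrow{\psi}W$, with $\psi$ sending edges to edges according to which $p_e$ they lie over. Since $\phi$ is surjective, $\psi_*$ is surjective, so $r\le b_1(\Gamma)$. For the reverse inequality I would first try to show that $\im c^*$ equals $\im\phi^*$ in $H^1(X)$. The class dual to the edge $e$ pulls back to the Poincaré dual of $\sum_{C_k\subseteq\Phi^{-1}(p_e)}[C_k]$, so $\im\phi^*\subseteq\im c^*$. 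The issue is whether an individual $[C_k]$ lies in this span. If it does not directly, my fallback is to modify $\Phi$ (equivalently, adjust the choice of $P$ and compress along the extra curves) so that $\psi$ becomes a homotopy equivalence, removing redundant components without changing $f_*$. Failing that, the genus formula still holds with $r$ replaced by $b_1(\Gamma)$, which may suffice for the later argument.
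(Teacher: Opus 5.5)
Everything you actually prove is correct, and it matches what the paper's argument establishes. Your identities $\operatorname{rank}B=n-v+1=b_1(\Gamma)$ and $g(X)=\sum_j g(Z_j)+b_1(\Gamma)$ combine to $\operatorname{rank}B+\sum_j g(Z_j)=g(X)$. That is exactly what the paper's splitting $H_1(X)=A\oplus\bigoplus_j K_j\oplus B$ gives; you reach it through the dual graph and an Euler characteristic count rather than a symplectic splitting.

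The genuine gap is the step you leave open, $b_1(\Gamma)\le r$ (i.e.\ $\operatorname{rank}B=r$). It cannot be filled, because for an arbitrary $\Phi$ and $P$ it is false. Take $g(X)=2$ and let $C_1,C_2$ be disjoint non-separating curves (say $a_1,a_2$ from a symplectic basis) whose complement is a connected four-holed sphere. Let $W$ be a single circle. Let $\Phi$ send each of two disjoint collars $C_i\times[-1,1]$ to $W$ by $(x,t)\mapsto\theta(t)$, and the rest of $X$ to the base point. Here $\theta$ is a smooth loop at the base point, constant near $\pm1$, going once around $W$ and passing through $p:=\theta(0)$ only at $t=0$, with $\theta'(0)\neq0$.

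Then $\Phi^*$ of a generator of $H^1(W)$ is Poincar\'e dual to $\pm[C_1]\pm[C_2]$, which is primitive, so $\phi$ is onto $\mathbb{Z}$ and $r=1$. But $p$ is a regular value with $\Phi^{-1}(p)=C_1\sqcup C_2$. So $\operatorname{rank}B=b_1(\Gamma)=2$, the only piece $Z$ has genus $0$, and $r+\sum_j g(Z_j)=1\neq 2=g(X)$. Only the span of the total classes $[\Phi^{-1}(p_e)]$, i.e.\ the Poincar\'e dual of $\im\phi^*$, is guaranteed to have rank $r$. The paper's proof does not address this either; it simply declares the rank-$r$ claim ``clear''.

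So the lemma holds only after $\Phi$ has been normalized, which is your first fallback. That needs a real argument, not a remark. You must homotope $\Phi$ until each $\Phi^{-1}(p_e)$ is a single curve, for instance by a Stallings-type binding-tie surgery (this is where surjectivity of $\phi$ enters). Alternatively, invoke the theorem that every epimorphism from a surface group onto $F_r$ is induced by pinching $r$ disjoint curves with connected complement. After normalizing, $n=r$, and $r\le b_1(\Gamma)=r-v+1$ forces $v=1$ and $\operatorname{rank}B=r$.

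Your second fallback is the formula with $b_1(\Gamma)$ in place of $r$, together with your bounds $r\le b_1(\Gamma)\le g(X)$. It does suffice for the final section: there $r=g$ squeezes $b_1(\Gamma)=g$ and forces every $Z_j$ to have genus $0$. It does not suffice for the proof of Lemma~\ref{lem:lift}, which needs a piece of positive genus whenever $r<g$. The example above has $r=1<2$ and no such piece.
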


\begin{proof}
The first statement is clear. Choose a splitting of the surjection $H_1(Z_j) \to H_1(Z_j)/\text{radical}$ for each $j$. Denote its image by $K_j$. Then $K_j$ has rank $2g(Z_j)$ and the restriction of the intersection pairing to it is unimodular. Then $H_1(X)$ decomposes
$$
\textstyle H_1(X) = A \oplus \bigoplus_j K_j \oplus B,
$$
where $A$ is orthogonal to $\bigoplus_j K_j$. The intersection pairing induces an isomorphism $A \to B^\vee$ so that $A\oplus B$ is a subgroup of $H_1(X)$ of rank $2r$ on which the intersection pairing is unimodular. This establishes the identity.
\end{proof}

\begin{remark}
This is an integral splitting of the monodromy weight filtration of the nilpotent endomorphism $\tau-\id$ of $H_1(X)$, where $\tau$ is the product of the (positive) Dehn twists about all of the simple closed curves in $f^{-1}(P)$. See \cite[Ex.~7.4]{hain:rel_wt}.
\end{remark}

\begin{lemma}
\label{lem:lift}
If $r<g(X)$, then there is a free group $F'$ of rank $r+1$, a surjective homomorphism $\phi' : \pi_1(X,p) \to F'$ and a homomorphism $\psi : F' \to F$ such that the diagram
$$
\begin{tikzcd}
\pi_1(X,p) \ar[r,twoheadrightarrow,"\phi'"] \ar[rd,twoheadrightarrow,"\phi"'] & F' \ar[d,"\psi"] \\
& F
\end{tikzcd}
$$
commutes.
\end{lemma}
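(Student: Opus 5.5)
The plan is to use the decomposition of $X$ along $\Phi^{-1}(P)$ together with Lemma~\ref{lem:genera}. Since $r<g(X)$, the identity $r+\sum_j g(Z_j)=g(X)$ forces some piece $Z_{j_0}$ to have positive genus. Inside $Z_{j_0}^o$ choose a non-separating simple closed curve $c$ that is non-separating in $Z_{j_0}$, meaning $Z_{j_0}-c$ stays connected. Then define a modified map $\Phi':X\to W'$. Here $W'$ is the bouquet of $r+1$ circles obtained from $W$ by attaching one new loop $e'$ at the cone point $\ast$, and $\psi:W'\to W$ collapses $e'$ to $\ast$.

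\textbf{Step 1: the piece $Z_{j_0}$ maps into a contractible set.} Each $Z_j^o$ is connected and misses $P$. So $\Phi(Z_j^o)$ lies in one component of $W-P$, which is a contractible open star around $\ast$. We may therefore homotope $\Phi$, rel a neighbourhood of $\Phi^{-1}(P)$, so that a compact subsurface of $Z_{j_0}^o$ containing a tubular neighbourhood $N\cong c\times[-1,1]$ of $c$ maps to $\ast$. More simply, we only need $N$ to map to $\ast$.

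\textbf{Step 2: define $\Phi'$.} On $N$, set $\Phi'(\theta,t)$ equal to the point of $e'$ with parameter $(t+1)/2$, so that $\partial N$ goes to $\ast$. Off $N$, set $\Phi'=\Phi$. This is the Pontryagin--Thom collapse along $c$ wedged with $\Phi$. Then $\psi\circ\Phi'$ is homotopic to $\Phi$, since it just collapses $e'$, and $\Phi\simeq f$ by Lemma~\ref{lem:realization}. Set $F'=\pi_1(W')$ and $\phi'=\Phi'_\ast$. After moving the basepoint suitably (conjugating, and absorbing the conjugation into $\phi'$), the diagram commutes.

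\textbf{Step 3: surjectivity of $\phi'$.} This is the main obstacle. First produce, for each original edge $e$, a loop in $X$ mapping to the generator $e$ exactly; then produce a loop mapping to $e'$.

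For $e'$: since $c$ is non-separating in $Z_{j_0}$, there is a closed curve $d\subset Z_{j_0}^o$ that meets $c$ transversely once and avoids $\Phi^{-1}(P)$. Its image under $\Phi'$ is the generator $e'$ times loops in the contractible star, so it represents a conjugate of $e'$.

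For the old generators: $\phi$ is surjective, so there are loops $\gamma_i$ with $\Phi_\ast(\gamma_i)=x_i$. Isotope each $\gamma_i$ off $N$. This is possible after adding copies of $d$, because the algebraic intersection with $c$ can be killed and a homotopy reduces to geometric disjointness only up to words. Then $\Phi'_\ast(\gamma_i)=x_i\,w_i$, where $w_i$ is a word in $e'$ and the $x$'s. The cleaner route is to argue via abelianization and Hopficity. The image $\im\phi'$ contains $e'$ up to conjugacy, and it maps onto $F$ under $\psi$. I would instead adjust: replace $\gamma_i$ by $\gamma_i$ composed with powers of $d$ (based suitably) to cancel the $e'$ letters. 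Since $\psi\circ\phi'=\phi$ is onto and $\ker\psi$ is normally generated by $e'$, which lies in the image, the image contains every $x_i$ modulo the normal closure of $e'$. To finish, I would show that the image is all of $F'$ by a Stallings folding argument: the image is a finitely generated subgroup containing a conjugate of $e'$ and surjecting onto $F$, and one must check that it contains $e'$ itself together with lifts of the $x_i$. I expect this to require choosing $\gamma_i$ disjoint from $c$. Achieving that should be possible because $c$ lies in $Z_{j_0}^o$ and $\gamma_i$ can be routed through a spanning tree of the dual graph avoiding the handle carrying $c$.

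If this fails, a fallback is to pass to the image $\im\phi'$. It is free, of rank at least $r+1$ since it surjects onto $F$ and contains a nontrivial element of $\ker\psi$ (non-Hopfian rank considerations), and at most $g$ by isotropy. If the rank exceeds $r+1$, project to a rank-$(r+1)$ quotient factoring to $F$.
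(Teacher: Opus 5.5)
\textbf{There is a genuine gap: Step 3 does not prove that $\phi'$ is surjective.}

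Your Steps 1--2 are fine, and they give a genuinely different construction from the paper's. The paper cuts off a one-holed torus $E\subset Z_j^o$ along a separating curve $C$ and uses $\pi_1(X,p)\cong\pi_1(X-E^o,p)*_{\pi_1(C,p)}\pi_1(E,p)$. It defines $\phi'$ as $\phi$ on the first factor and as an abelian map $\pi_1(E,p)\to\pi_1(S^1)$ on the second; these are compatible because $C$ is a commutator in $\pi_1(E,p)$. Your Pontryagin--Thom collapse along a non-separating $c$ is the HNN-extension analogue.

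You flag surjectivity as the main obstacle but do not close it: none of your routes shows $\operatorname{im}\phi'\supseteq F$.
\begin{itemize}
\item Zero algebraic intersection with $c$ does not make a loop homotopic to one disjoint from $c$.
\item Multiplying $\gamma_i$ by powers of $d$ cannot delete $e'$-letters that occur in the middle of $\Phi'_*(\gamma_i)$.
\item A subgroup of $F*\langle e'\rangle$ that contains $e'$ and maps onto $F$ under $\psi$ need not be all of $F*\langle e'\rangle$. For $F=\langle x\rangle$, the subgroup $\langle e',\,xe'xe'^{-1}x^{-1}\rangle$ has a folded Stallings graph with three vertices, so it is proper.
\item The spanning-tree remark is only a heuristic.
\item In the fallback, the bound $\operatorname{rank}\ge r+1$ is correct. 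But the last sentence, that $\psi$ restricted to $\operatorname{im}\phi'$ factors through a surjection onto a free group of rank $r+1$, is the real content and is left unproved. It does follow from Nielsen's theorem that a generating tuple of a free group is Nielsen equivalent to a basis padded with $1$'s.
\end{itemize}

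The missing idea is short, and it is the same mechanism the paper uses: there, $\phi(\pi_1(E,p))=1$ forces $\phi(\pi_1(X-E^o,p))=F$.
\begin{enumerate}
\item Choose $d$ to cross $N$ in a single arc, and put the base point at $q\in d\cap\partial N$, so that $\Phi(q)=\ast$.
\item Since $c$ does not separate, $X-N^o$ is connected. So $\pi_1(X,q)$ is an HNN extension of $\pi_1(X-N^o,q)$ with stable letter $t=[d]$; in particular, it is generated by $\pi_1(X-N^o,q)$ and $t$.
\item Because $d\subset Z_{j_0}^o$, the loop $\Phi\circ d$ lies in the contractible star $W-P$. Hence $\Phi_*(t)=1$, and therefore $\Phi_*(\pi_1(X-N^o,q))=\Phi_*(\pi_1(X,q))=F$.
\item Since $\Phi'=\Phi$ on $X-N^o$, this gives $\operatorname{im}\phi'\supseteq F$. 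Also $\Phi'_*(t)=e'^{\pm1}$ exactly, not merely up to conjugacy.
\end{enumerate}
Hence $\phi'$ is onto $F*\langle e'\rangle$, and no folding, Hopficity, or fallback is needed.
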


\begin{proof}
Lemma~\ref{lem:genera} implies that there is a component $Z_j^o$ of $\Phi^{-1}(W-P)$ of positive genus. It contains a separating simple closed curve $C$ that divides $Z_j$ into a subsurface $E$ of genus 1 with boundary $C$ and a subsurface of of genus $g(Z_j)-1$ that contains $\partial Z_j$. We may assume that $p\in C$. Since the image of $Z_j^o$ lies in the contractible set $W-P$, the homomorphism $\pi_1(E,p) \to \pi_1(W,f(p))$ is trivial. Let $\phi_E : \phi_E : (E,p) \to (S^1,\ast)$ be a continuous map that induces a surjection $\pi_1(E,p) \to \pi_1(S^1,\ast)$. Set $F' = F\ast \pi_1(S^1)$ and let $\psi : F' \to F$ be the homomorphism with kernel generated by $\pi_1(E,p)$ whose restriction to $F$ is the identity. By Van Kampen
$$
\pi_1(X,p) \cong \pi_1(X-E^o,p) \ast_{\pi_1(C,p)} \pi_1(E,p),
$$
where $E^o = E-C$. The homomorphisms $\pi_1(X-E^o,p) \to \pi_1(X,p) \to F$ and $\psi$ induce a surjective homomorphism $\pi_1(X,p) \to F'$ whose composition with $\phi$ is $\phi$.
\end{proof}

\begin{corollary}
\label{cor:F_g}
If $\gamma$ is a non-trivial element of $\pi_1(X,p)$, then there is a free group $F$ of rank $g$ and a surjective homomorphism $\phi : \pi_1(X,p) \to F$ such that $\phi(\gamma) \neq 1$.
\end{corollary}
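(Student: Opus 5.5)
We argue by induction on the rank, starting from residual freeness and repeatedly applying Lemma~\ref{lem:lift} until the rank reaches $g$. Since $\gamma\neq 1$ and surface groups are residually free \cite{baumslag}, there is a surjection $\phi_0:\pi_1(X,p)\to F_0$ onto a free group of rank $r_0$ with $\phi_0(\gamma)\neq 1$. By the isotropy argument in the Preliminaries, $r_0\le g$. If $r_0=g$ we are done, so assume $r_0<g$.

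For the inductive step, suppose we have a surjection $\phi_k:\pi_1(X,p)\to F_k$ with $F_k$ free of rank $r_k<g$ and $\phi_k(\gamma)\ne 1$. First realize $\phi_k$ by a map $f_k:X\to W_k$ to a bouquet of $r_k$ circles. Next replace $f_k$ by a smooth $\Phi_k$ via Lemma~\ref{lem:realization}, choose regular values, and form the decomposition into the surfaces $Z_j$. Since $r_k<g$, Lemma~\ref{lem:genera} guarantees that some $Z_j$ has positive genus, so the hypotheses of Lemma~\ref{lem:lift} hold. That lemma produces a surjection $\phi_{k+1}:\pi_1(X,p)\to F_{k+1}$, with $F_{k+1}$ free of rank $r_k+1$, and a homomorphism $\psi:F_{k+1}\to F_k$ satisfying $\psi\circ\phi_{k+1}=\phi_k$. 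Then
$$
\psi(\phi_{k+1}(\gamma))=\phi_k(\gamma)\neq 1,
$$
so $\phi_{k+1}(\gamma)\neq 1$. Non-triviality is therefore preserved when passing up the tower.

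The rank increases by one at each step, so after $g-r_0$ steps we reach a surjection onto a free group of rank exactly $g$ that does not kill $\gamma$. The process cannot overshoot, since any surjection onto a free group has rank at most $g$; in any case we stop as soon as the rank equals $g$.

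The only delicate point is that Lemma~\ref{lem:lift} is stated for the $\phi$ fixed at the start of the Preliminaries. Each iteration must therefore redo the whole setup for the new $\phi_k$: choose $f_k$, obtain $\Phi_k$, pick the regular values $P_k$, and decompose $X$. I also need to check that surjectivity of $\phi_{k+1}$ really holds, as asserted in the lemma. This follows because $F_k$ is the image of $\pi_1(X-E^o)$ and the new free factor is the image of $\pi_1(E)$. Apart from these checks, the argument is a finite induction.
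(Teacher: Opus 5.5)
Your argument is exactly how the paper intends the corollary to follow from Lemma~\ref{lem:lift}: start from a free quotient supplied by residual freeness, iterate the lemma, use $\psi\circ\phi_{k+1}=\phi_k$ to keep $\phi_{k+1}(\gamma)\neq 1$, and use $r_0\le g$ so that the rank, rising by one each step, stops exactly at $g$. One caveat: the lemma's only hypothesis is $r_k<g$, so drop your appeal to Lemma~\ref{lem:genera}, whose claim $\operatorname{rank}B=r$ fails for some choices of $\Phi_k$ (for $g=2$, a map $X\to S^1$ whose fibre over a regular value is two disjoint, homologically independent curves is $\pi_1$-surjective, yet its complement is a four-holed sphere; in general one only has $\operatorname{rank}B=g-\sum_j g(Z_j)\ge r$)---the paper's proof of Lemma~\ref{lem:lift} relies on the same step, and a clean fix for the corollary is to first factor $\phi$ through the dual graph of $\Phi^{-1}(P)$, a $\pi_1$-surjection onto a free group of rank $g-\sum_j g(Z_j)\ge r$, and then add a handle from a positive-genus $Z_j$ only if that rank is still below $g$.
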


\section{The proof}

It suffices to show that, for each non-trivial element $\gamma\in \pi_1(X,p)$, there is a handlebody $U$ with boundary $X$ such that $\gamma$ is non-trivial in $\pi_1(U,p)$. Fix such a $\gamma$. By Corollary~\ref{cor:F_g} there is a free group $F$ of rank $g$ and a surjective homomorphism $\phi : \pi_1(X,p) \to F$ with $\phi(\gamma) \neq 1$. Realize this by a smooth map $f : X \to W$, where $W$ is a bouquet of $g$ circles as in Lemma~\ref{lem:realization}. Fix a set $P$ of regular values and decompose $X$ as described above. Lemma~\ref{lem:genera} implies that each $Z_j$ has genus 0. For each $j$ there is a 3-ball $B_j$ with $Z_j\subset \partial B_j$ and $\partial B - Z_j^o$ a union $E_j$ of closed disks.

Since $\pi_2(W,P)$ is trivial, the map $h_j : (Z_j,\partial Z_j) \to (W,P)$ extends to a continuous map $\htilde_j : (B_j,E_j) \to (W,P)$. Let
$$
U = (\coprod_j B_j\big)/\sim
$$
where $\sim$ is the equivalence relation obtained by identifying the pairs of components of $\coprod_j E_j$ that share a common boundary in $X$. This is a handlebody of genus $g$ with boundary $X$. The maps $\htilde_j$ induce a continuous map $U \to W$. The induced homomorphism $\pi_1(U,p) \to \pi_1(W,\ast)$ is clearly surjective. It is an isomorphism as both groups are free of rank $g$ and as free groups are Hopfian.

\end{document}